\newtheorem{theorem}{Theorem}[section]
\newtheorem{proposition}[theorem]{Proposition}
\newtheorem{lemma}[theorem]{Lemma}
\newtheorem{corollary}[theorem]{Corollary}
\theoremstyle{definition}
\theoremstyle{remark}
\numberwithin{equation}{section}
\newcommand{\Z}{\mathbb{Z}}
\newcommand{\Q}{\mathbb{Q}}
\newcommand{\C}{\mathbb{C}}
\newcommand{\honil}{\operatorname{honil}}
\title[Homotopy commutativity in Hermitian symmetric spaces]{Homotopy commutativity in Hermitian symmetric spaces}
\author[Daisuke Kishimoto]{Daisuke Kishimoto}
\address{Department of Mathematics, Kyoto University, Kyoto, 606-8502, Japan}
\email{kishi@math.kyoto-u.ac.jp}
\author[Masahiro Takeda]{Masahiro Takeda}
\address{Department of Mathematics, Kyoto University, Kyoto, 606-8502, Japan}
\email{takeda.masahiro.87u@st.kyoto-u.ac.jp}
\author[Yichen Tong]{Yichen Tong}
\address{Department of Mathematics, Kyoto University, Kyoto, 606-8502, Japan}
\email{tong.yichen.25m@st.kyoto-u.ac.jp}
\date{\today}
\subjclass[2010]{55P35, 55Q15}
\keywords{homotopy commutativity, Hermitian symmetric space, flag manifold, Samelson product, Whitehead product}
\begin{document}

\maketitle

\begin{abstract}
  Ganea proved that the loop space of $\C P^n$ is homotopy commutative if and only if $n=3$. We generalize this result to that the loop spaces of all irreducible Hermitian symmetric spaces but $\C P^3$ are not homotopy commutative. The computation also applies to determining the homotopy nilpotency of the loop spaces of flag manifolds.
\end{abstract}

%%%%% Section 1 %%%%%

\section{Introduction}\label{introduction}

A fundamental problem on H-spaces is to find whether or not a given H-space is homotopy commutative. This was intensely studied for finite H-spaces, and a complete answer was given by Hubbuck \cite{H} such that if a connected finite H-space is homotopy commutative, then it is homotopy equivalent to a torus. As for infinite H-spaces, the problem should be studied by fixing a class of infinite H-spaces because there are too many classes of infinite H-spaces, each of which has its own special features.

In \cite{G}, Ganea studied the homotopy nilpotency of complex projective spaces, and in particular, he proved that the loop space of the complex projective space $\C P^n$ is homotopy commutative if and only if $n=3$. Then we continue this work to study the homotopy commutativity of the loop spaces of homogeneous spaces. Recently, Golasi\'{n}ski \cite{Go} showed that the loop spaces of some homogeneous spaces such as complex Grassmannians are homotopy nilpotent. However, their homotopy nilpotency classes are not computed: it is not even proved that they are homotopy commutative or not. In this paper, we study the homotopy commutativity of Hermitian symmetric spaces, which generalizes Ganea's result and makes Golasi\'{n}ski's result more concrete. Recall that every Hermitian symmetric space is a product of irreducible ones in the following table.

\renewcommand{\arraystretch}{1.2}

\begin{table}[htbp]
  \centering
  \begin{tabular}{l|ll}
    \hline
    AIII&$U(m+n)/U(m)\times U(n)$&$(m,n\ge 1)$\\
    BDI&$SO(n+2)/SO(2)\times SO(n)$&$(n\ge 3)$\\
    CI&$Sp(n)/U(n)$&$(n\ge 4)$\\
    DIII&$SO(2n)/U(n)$&$(n\ge 4)$\\
    EIII&$E_6/Spin(10)\cdot T^1$&$(Spin(10)\cap T^1\cong\Z/4)$\\
    EVII&$E_7/E_6\cdot T^1$&$(E_6\cap T^1\cong\Z/3)$\\\hline
  \end{tabular}
\end{table}

\noindent Then we only need to consider the loop spaces of irreducible Hermitian symmetric spaces. Now we state the main theorem.

\begin{theorem}
  \label{main}
  The loop spaces of all irreducible Hermitian symmetric spaces but $\C P^3$ are not homotopy commutative.
\end{theorem}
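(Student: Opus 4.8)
The plan is to produce, for every irreducible Hermitian symmetric space $M\ne\C P^3$, a single nonzero Whitehead product in $\pi_*(M)$. This suffices: if $\Omega M$ is homotopy commutative then the reduced commutator $\Omega M\wedge\Omega M\to\Omega M$ is nullhomotopic, so every Samelson product in $\pi_*(\Omega M)$ vanishes, and under the isomorphism $\pi_k(\Omega M)\cong\pi_{k+1}(M)$ Samelson products correspond, up to sign, to Whitehead products.

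The main device is to pass from $M$ to the space obtained by dividing out only the semisimple part of the isotropy group. Write $M=G/K$ with $G$ compact simply connected and $K=K_s\cdot T^1$, $K_s$ semisimple, and set $L=G/K_s$. The residual circle makes $L\to M$ a principal $S^1$-bundle whose Euler class generates $H^2(M;\Z)\cong\Z$ (it is the minimal homogeneous line bundle), so the connecting homomorphism $\pi_2(M)\to\pi_1(S^1)$ is an isomorphism; hence $L$ is $2$-connected and $L\to M$ induces isomorphisms on $\pi_j$ for $j\ge 3$. Since a Whitehead product of classes of degree $\ge 3$ is natural and lands in degree $\ge 5$, it is enough to exhibit a nonzero Whitehead product in $\pi_{\ge 3}(L)$. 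For $M=\C P^n$ (type AIII with $m=1$) one has $L=S^{2n+1}$, and this reproduces Ganea's theorem: the Whitehead square $[\iota_{2n+1},\iota_{2n+1}]$ is nonzero in $\pi_{4n+1}(S^{2n+1})$ unless $2n+1\in\{1,3,7\}$ by the Hopf invariant one theorem, so $\Omega\C P^n$ is not homotopy commutative for $n\ge 2$, $n\ne 3$. The device misses only $n=1$, where $\C P^1=S^2$ and one uses the Whitehead product $[\iota_2,\iota_2]=\pm 2\eta\ne 0$ through $\pi_2$ directly, and $n=3$, which is the genuine exception.

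For the remaining types --- $\mathrm{Gr}_k(\C^{k+l})$ with $k,l\ge 2$, the quadrics $Q^n$, $Sp(n)/U(n)$, $SO(2n)/U(n)$, $E_6/Spin(10)$ and $E_7/E_6$ --- I would determine the bottom homotopy of $L$ from the homotopy exact sequence of $K_s\to G\to L$, using the known unstable homotopy groups of the classical and exceptional groups and the Dynkin indices of the relevant inclusions; in each case $L$ is $(r-1)$-connected for an explicit $r\ge 3$ (for instance $r=4$ for $L=SU(k+l)/(SU(k)\times SU(l))$ and $r=n$ for $L=V_2(\R^{n+2})$ over $Q^n$), with $L^{(r+1)}$ equal to $S^r$, to a wedge $S^r\vee S^{r+1}$, or to a mod-$p$ Moore space. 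The candidate class is the image of $[\iota_r,\iota_r]$ under the bottom-cell map $S^r\to L$; when $r$ is even this has infinite order (Hopf invariant $\pm 2$) and so is not annihilated by the finitely many, finite-order attaching maps of the cells of $L$ in dimensions $r+1,\dots,2r-1$, and when $r$ is odd one shows it survives provided $r\notin\{3,7\}$ by computing the bottom-cell inclusion on $\pi_{2r-1}$ --- via Hilton's theorem if $L^{(2r-1)}$ is a wedge of spheres, and by a Blakers--Massey argument (reducing to whether $[\iota_r,\iota_r]$ is divisible by $2$ in $\pi_{2r-1}(S^r)$) if it is a Moore space. In the many higher-rank cases where a defining relation of $H^*(M;\Q)$ has a nonzero quadratic part this Whitehead product is already nonzero rationally, and can be read straight off the Sullivan minimal model of $M$ or of $L$.

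The main obstacle is the rest of this case analysis. For $\C P^n$ with $n\ge 2$, and for the ``thin'' higher-rank spaces such as $\mathrm{Gr}_2(\C^{k+l})$ with $l$ large, the rational homotopy Lie algebra is abelian, so the obstruction to homotopy commutativity lies entirely in torsion and must be chased $p$-locally --- controlling $\pi_*$ of $SU(m)$, $Sp(m)$, $Spin(m)$, $E_6$, $E_7$ in a range and the effect of the subgroup inclusions, and, when $L$ carries torsion in its integral homology between degrees $r$ and $2r$, checking that the candidate class is not killed. The genuinely delicate points are the spaces with $r\in\{3,7\}$ --- notably the low quadrics $Q^3$ and $Q^7$ --- where $[\iota_r,\iota_r]=0$ and the naive Whitehead square vanishes; these must instead be settled with a Whitehead product $[\epsilon,\beta]$ of the degree-$2$ generator $\epsilon$ of $\pi_2(M)$ against an odd-degree homotopy generator $\beta$, detected by a functional (secondary) cohomology operation, equivalently by the Bockstein and Browder structure on $H_*(\Omega M;\Z/p)$. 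Finally the low-rank coincidences (for example $SO(6)/U(3)\cong\C P^3$, the reason the DIII family is tabulated only for $n\ge 4$) have to be matched against Ganea's positive result for $\C P^3$, confirming that it is the sole exception.
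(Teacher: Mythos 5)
Your skeleton matches the paper's in outline: reduce to the irreducible spaces, quote Ganea for $\C P^n$, read off rational Whitehead products from a quadratic part of the minimal-model differential (this indeed disposes of CI, DIII and EVII, where Watanabe's computation of $H^*(B(E_6\cdot T^1))$ supplies the quadratic terms), and for the rest pass to the circle bundle $L=G/K_s\to M$ and use naturality. But past that point the proposal is a plan with its gaps concentrated exactly where the content lies, and one of its two general mechanisms is flawed. For $r$ even you argue that $[\iota_r,\iota_r]$ has infinite order and so survives the attaching maps of cells in dimensions $r+1,\dots,2r-1$; this ignores the cells of dimension exactly $2r$, which are attached along elements of $\pi_{2r-1}$ of the skeleton and can kill an infinite-order Whitehead square (already for $\C P^n$ the image of $[\iota_2,\iota_2]$ dies since $\pi_3(\C P^n)=0$). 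For the Grassmannians $G_{m,n}$ with $m,n\ge2$ this is not a technicality: for most $(m,n)$ the rational homotopy in degree $2r-1$ vanishes, so the obstruction is pure torsion and your ``$p$-local chase'' is the whole proof. The paper's actual detection device there is a primary Steenrod-operation argument (Lemma \ref{criterion}): $\mathrm{Sq}^2c_2=c_1c_2$ for $m=2$, and for $m>2$ Shay's mod $p$ Wu formula giving the term $-(m+1)c_kc_{m-k+1}$ in $\mathcal{P}^1c_{m-p+2}$ for a prime $p\in(m/2,m]$, combined with sphericality of $c_i$ for $i\le p$ and a Bertrand/Ramanujan-prime argument to dodge $p\mid m+1$. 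Nothing in your outline substitutes for this.

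The quadrics expose a second genuine gap. Your claim that only $Q^3$ and $Q^7$ are delicate is not correct: the bottom-cell method is problematic for every $n=2^j-1$. When $n+1$ is not a power of $2$ the paper simply quotes \={O}shima's theorem that $[\iota,\iota]\ne0$ in $\pi_{2n-1}(SO(n+2)/SO(n))$, consistent with your plan; but for $n=2^j-1$ your Moore-space/Blakers--Massey reduction asks whether $[\iota_n,\iota_n]$ is divisible by $2$, which is the strong Kervaire invariant question, and for $n=15,31,63$ the Whitehead square \emph{is} divisible by $2$, so its image in the $(n+1)$-skeleton $S^n\cup_2e^{n+1}$ of $L$ vanishes and your candidate class dies: the method provably fails there, and your fallback (``functional/secondary operations, Bockstein and Browder structure'') is only a gesture. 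The paper settles all $n=2^m-1$ uniformly with a primary operation in $Q_n$ itself, $\mathrm{Sq}^2e=te$ (Ishitoya), applying Lemma \ref{criterion} with one of the two maps defined on the suspension of a mod $2$ Moore space rather than a sphere. Similarly, for EIII your scheme needs the degree-$8$ class to be spherical; the paper gets this from Conlon's theorem comparing $E_6/Spin(10)$ with the Cayley plane in a range, and your proposal offers no mechanism. So the framing is right, but the key detection arguments for types AIII, BDI (with $n=2^j-1$) and EIII are missing or would fail as described.
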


Theorem \ref{main} will be proved by a case-by-case analysis of irreducible Hermitian symmetric spaces. Our main tools for the analysis are rational homotopy theory (Section \ref{rational homotopy}) and Steenrod operations (Section \ref{Steenrod operation}). The rational homotopy technique also applies to flag manifolds, so that we can prove the following, where the definition of the homotopy nilpotency will be given in Section \ref{rational homotopy}.

\begin{theorem}
  \label{main flag}
  Let $G$ be a compact connected non-trivial Lie group with maximal torus $T$. Then the loop space of the flag manifold $G/T$ is homotopy nilpotent of class 2.
\end{theorem}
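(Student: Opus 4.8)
The plan is to prove the upper bound (homotopy nilpotency class $\le 2$) and the lower bound (non-commutativity) separately, the latter via the rational homotopy technique of Section~\ref{rational homotopy}. There is nothing to prove when $G$ is a torus, so I assume $G$ is not a torus, and since passing to the semisimple part of $G$ does not change $G/T$, I assume $G$ is semisimple.

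\emph{Lower bound, and the rational picture.} The space $G/T$ has the standard pure Sullivan model $(\Lambda(t_1,\dots,t_r)\otimes\Lambda(u_1,\dots,u_r),d)$ with $|t_i|=2$, $dt_i=0$, and $du_j=\sigma_j(t_1,\dots,t_r)$ the basic Weyl invariants. The rational homotopy Lie algebra $L_*=\pi_{*+1}(G/T)\otimes\Q$ (with Whitehead/Samelson bracket) is recovered from the word-length-$2$ part $\bar d$ of $d$: here $\bar d(t_i)=0$, and $\bar d(u_j)=\sigma_j$ exactly when $\deg u_j=3$, i.e.\ when $\sigma_j$ is a quadratic invariant. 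Consequently the only nonzero brackets are those $[L_1,L_1]\subseteq L_2$, where $L_1$ is dual to $\langle t_1,\dots,t_r\rangle$; in particular every triple bracket vanishes, so $L_*$ is nilpotent of class $\le 2$. On the other hand a simple factor of $G$ contributes a quadratic invariant, namely its nondegenerate Killing form, so $[a_i,a_j]\neq 0$ for suitable $i,j$, $L_*$ is non-abelian, and $\Omega(G/T)$ is not rationally homotopy commutative, hence not homotopy commutative. Thus the theorem already holds rationally.

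\emph{Integral upper bound.} Looping the bundle $T\to G\to G/T$ and reading off the Puppe sequence gives a fibration sequence $\Omega G\to\Omega(G/T)\to T\xrightarrow{j}G$, so $\Omega(G/T)$ is the homotopy fiber of the inclusion $j$. Since $j$ is a homomorphism, this homotopy fiber is modelled by the topological group $K=\{(t,\gamma)\in T\times PG:\gamma(0)=e,\ \gamma(1)=j(t)\}$ under pointwise multiplication, with $K\simeq\Omega(G/T)$ as loop spaces because $BK\simeq\operatorname{hofib}(BT\to BG)=G/T$; moreover there is a short exact sequence of topological groups $1\to\Omega G\to K\to T\to 1$, the map $K\to T$ being the first projection. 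As $T$ is abelian, $[K,K]\subseteq\Omega G$, and the iterated commutator satisfies $[[a,b],c]=[a,b]\cdot c_c([a,b]^{-1})$, where $c_c$ denotes conjugation by $c\in K$ on the normal subgroup $\Omega G$; hence the triple commutator map $K^3\to K$ factors through $\Psi\colon\Omega G\times K\to\Omega G$, $\Psi(\lambda,k)=\lambda\cdot c_k(\lambda^{-1})$. For $k=(t,\gamma)$ one has $c_k(\lambda)(u)=\gamma(u)\lambda(u)\gamma(u)^{-1}$, and reparametrising the path yields $h_s(k)(\lambda)(u)=\gamma(su)\lambda(u)\gamma(su)^{-1}$, a homotopy through based loops, continuous in all variables, running from $\mathrm{id}$ at $s=0$ to $c_k$ at $s=1$. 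Then $\Psi_s(\lambda,k)=\lambda\cdot h_s(k)(\lambda^{-1})$ contracts $\Psi$ to the constant map, so the triple commutator of $\Omega(G/T)$ is null-homotopic and $\Omega(G/T)$ is homotopy nilpotent of class $\le 2$. Together with the lower bound, the class is exactly $2$.

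\emph{Main obstacle.} The crux is the integral upper bound — specifically, identifying $\Omega(G/T)$ with a group extension of the torus $T$ by $\Omega G$ in which the conjugation action on $\Omega G$ is homotopically trivial; the path-reparametrisation homotopy is what supplies this, after which the nilpotency bound is formal. The lower bound, by contrast, is just a reading of the standard pure model of $G/T$.
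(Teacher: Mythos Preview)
Your proof is correct, and your lower bound via the quadratic Weyl invariant is essentially the paper's argument (Lemma~\ref{d_2} applied to the Sullivan model of $G/T$, with the degree-$4$ invariant supplying the nonvanishing quadratic differential). Your upper bound, however, takes a genuinely different route. The paper simply invokes Corollary~\ref{honil}: since the homotopy fiber of the inclusion $T\hookrightarrow G$ is $\Omega(G/T)$ and $\honil(T)=1$, Zabrodsky's general inequality $\honil(F)\le\honil(X)+1$ (Proposition~\ref{nilpotency extension}, quoted from \cite{Z}) gives $\honil(\Omega(G/T))\le 2$ in one line. You instead build an explicit group model $K$ for $\Omega(G/T)$ as an extension $1\to\Omega G\to K\to T\to 1$ and exhibit a concrete nullhomotopy of the triple commutator via the path-reparametrisation $h_s(k)(\lambda)(u)=\gamma(su)\lambda(u)\gamma(su)^{-1}$, which makes visible that the conjugation action of $K$ on the normal subgroup $\Omega G$ is homotopically trivial. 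Your approach is self-contained and explains \emph{why} the triple commutator dies; the paper's is shorter and immediately covers any $G/H$ with $H$ homotopy commutative. One small point worth making explicit in your write-up: your homotopy $\Psi_s$ sends both $(e,k)$ and $(\lambda,e)$ to the constant loop for every $s$, so it descends to the smash product and kills the \emph{reduced} iterated commutator $\gamma_3$, which is the definition of homotopy nilpotency used here; without that remark the passage from the unreduced triple commutator on $K^3$ to $\gamma_3$ on $K^{\wedge 3}$ is not automatic.
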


%%%%% Acknowledgement %%%%%

\subsection*{Acknowledgement}

The authors are grateful to Toshiyuki Miyauchi for informing them the result of \={O}shima \cite{O}. The first author were partially supported by JSPS KAKENHI Grant Numbers 17K05248 and 19K03473 (Kishimoto), JSPS KAKENHI Grant Number 21J10117 (Takeda), and JST SPRING Grant Number JPMJSP2110 (Tong).

%%%%% Section 2 %%%%%

\section{Rational homotopy}\label{rational homotopy}

In this section, we apply rational homotopy theory to prove that the loop spaces of irreducible Hermitian symmetric spaces of type CI, DIII and EVII are not homotopy commutative. We also consider the homotopy nilpotency of flag manifolds. By \cite[Proposition 13.16]{FHT} and the adjointness of Whitehead products and Samelson products, we have the following criterion for a loop space not being homotopy commutative.

\begin{lemma}
  \label{d_2}
  Let $(\Lambda V,d)$ be the minimal Sullivan model of a simply-connected CW complex of finite type $X$. If there is $x\in V$ such that
  \[
    dx\not\equiv 0\mod\Lambda^3V,
  \]
  then $\Omega X$ is not homotopy commutative.
\end{lemma}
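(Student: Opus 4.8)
The plan is to convert the hypothesis on the minimal model into the existence of a nonzero Samelson product in $\pi_*(\Omega X)\otimes\Q$, and then invoke the standard fact that a homotopy commutative loop space has all Samelson products trivial.

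First I would set up the rational homotopy Lie algebra $L_*=\pi_*(\Omega X)\otimes\Q$ with its Samelson bracket. Since $X$ is simply connected of finite type, its minimal Sullivan model $(\Lambda V,d)$ satisfies $V^n\cong\mathrm{Hom}(\pi_n(X),\Q)$, and under the adjoint isomorphism $\pi_n(\Omega X)\cong\pi_{n+1}(X)$ this identifies $V$, up to a degree shift, with the graded dual of $L_*$. The input from \cite[Proposition~13.16]{FHT} is that the linear part of $d$ vanishes by minimality and the quadratic part $d_1\colon V\to\Lambda^2V$ is dual to the Whitehead product pairing on $\pi_*(X)\otimes\Q$; via the adjointness of Whitehead and Samelson products, the Whitehead product corresponds, up to sign, to the bracket of $L_*$. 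In other words, $d_1$ is the linear dual of the Lie bracket of the rational homotopy Lie algebra.

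Next I would argue by contraposition. The hypothesis $dx\not\equiv0\bmod\Lambda^3V$ says precisely that the quadratic component $d_1x\in\Lambda^2V$ is nonzero, hence $d_1x$ contains some basis monomial $v_iv_j$ with nonzero coefficient. Pairing against the dual basis of $L_*$, this forces the Samelson product of the two corresponding nonzero classes in $\pi_*(\Omega X)\otimes\Q$ to be nonzero; equivalently, $\pi_*(X)\otimes\Q$ carries a nontrivial Whitehead product. On the other hand, if $\Omega X$ were homotopy commutative, then $\mu\simeq\mu\circ T$, so the commutator map $\Omega X\times\Omega X\to\Omega X$ would be null-homotopic; consequently its factorization $\Omega X\wedge\Omega X\to\Omega X$ would be null and every Samelson product in $\pi_*(\Omega X)$ — a fortiori after rationalization — would vanish. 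This contradicts the previous sentence, so $\Omega X$ is not homotopy commutative. (Note that this yields only a sufficient condition, which is why the argument runs in one direction only.)

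The proof is short, and the only point needing care is bookkeeping: matching the degree shift between $V$ and $L_*^\vee$ and tracking the Koszul signs in \cite[Proposition~13.16]{FHT}, so that ``$d_1x\neq0$ in $\Lambda^2V$'' genuinely witnesses a nonvanishing bracket rather than a combination of brackets cancelling under graded antisymmetry. This worry is in fact vacuous: nonvanishing of an element of $\Lambda^2V$ already exhibits a monomial $v_iv_j$ occurring with nonzero coefficient, and no cancellation is possible in the free graded-commutative algebra. Thus, once the duality statement is quoted, each step is routine.
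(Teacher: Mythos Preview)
Your proposal is correct and follows essentially the same route as the paper: the paper's entire justification for this lemma is the one-line remark that it follows from \cite[Proposition~13.16]{FHT} together with the adjointness of Whitehead and Samelson products, and you have simply unpacked that sentence in detail. Your extra care about the duality, the degree shift, and the possible cancellation in $\Lambda^2V$ is sound and does not depart from the paper's intended argument.
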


In order to apply Lemma \ref{d_2}, we will use the following lemma.

\begin{lemma}
  \label{model}
  Let $X,Y$ be simply-connected spaces such that
  \[
    H^*(X;\Q)=\Q[x_1,\ldots,x_m]\quad\text{and}\quad H^*(Y;\Q)=\Q[y_1,\ldots,y_n].
  \]
  If a map $f\colon X\to Y$ is injective in rational cohomology, then there is a Sullivan model of the homotopy fiber of $f$ such that
  \[
    (\Lambda(x_1,\ldots,x_m,z_1,\ldots,z_n),d),\quad dx_i=0,\quad dz_i=f^*(y_i).
  \]
\end{lemma}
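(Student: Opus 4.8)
The plan is to construct the Sullivan model of the homotopy fiber of $f$ directly, using the standard machinery of relative Sullivan algebras together with the hypothesis that $f^*$ is injective. First I would recall that since $H^*(X;\Q)$ and $H^*(Y;\Q)$ are polynomial algebras, both $X$ and $Y$ are rationally formal and in fact their minimal Sullivan models can be computed: for $X$ one has $(\Lambda(x_1,\dots,x_m)\otimes\Lambda W_X, d_X)$ where the $x_i$ are cocycles dual to the polynomial generators and $W_X$ consists of higher generators killing the higher syzygies (and similarly for $Y$). However, rather than drag in the full minimal models, the cleaner route is to work with a relative Sullivan model of $f$ itself: choose a Sullivan model $(\Lambda V_Y, d_Y)\xrightarrow{\simeq} A_{PL}(Y)$ and lift $f$ to a relative Sullivan algebra $(\Lambda V_Y,d_Y)\hookrightarrow(\Lambda V_Y\otimes\Lambda V_X', D)\xrightarrow{\simeq}A_{PL}(X)$; then by \cite[\S 15]{FHT} the homotopy fiber $F$ of $f$ has Sullivan model the quotient $(\Lambda V_X', \bar D)$ obtained by setting $V_Y=0$.

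The key step is to exploit injectivity of $f^*$ to make this quotient model as simple as stated. Concretely, I would take the model of $Y$ to be its \emph{bigraded} minimal model: since $H^*(Y;\Q)=\Q[y_1,\dots,y_n]$ is a polynomial algebra (a complete intersection with no relations among the generators), the minimal model of $Y$ is $(\Lambda(y_1,\dots,y_n)\otimes\Lambda(\text{odd generators}), d)$ with the odd generators mapping to the Koszul relations; but because there are no relations, the minimal model of $Y$ is simply $(\Lambda(y_1,\dots,y_n),0)$ — that is, $Y$ is rationally a product of even Eilenberg–MacLane spaces, and $(\Lambda(y_1,\dots,y_n),0)$ is already minimal. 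Now form the relative model over this: adjoin generators $z_1,\dots,z_n$ with $dz_i = y_i$ — this is precisely the acyclic closure / Koszul-type extension, and the resulting $(\Lambda(y_1,\dots,y_n,z_1,\dots,z_n),d)$ is a (non-minimal) Sullivan model of the point, hence a model of the total space of the path fibration over $Y$. Pulling back along $f$, i.e. forming $A_{PL}(X)\otimes_{\Lambda(y_i)}\Lambda(y_i,z_i)$, and then replacing $A_{PL}(X)$ by its cohomology $\Q[x_1,\dots,x_m]$ (legitimate since $X$ is formal and the pullback of a Sullivan extension is again one), yields the Sullivan algebra $(\Lambda(x_1,\dots,x_m,z_1,\dots,z_n),d)$ with $dx_i=0$ and $dz_i=f^*(y_i)$. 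One checks this is a Sullivan model of the homotopy fiber: its cohomology is $\mathrm{Tor}_{\Q[y_i]}(\Q[x_i],\Q)$, which is the cohomology of the fiber by the Eilenberg–Moore spectral sequence, and the spectral sequence degenerates because everything is concentrated in even degrees on one side.

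The main obstacle I anticipate is justifying that one may replace $A_{PL}(X)$ by $\Q[x_1,\dots,x_m]$ while keeping the extension by the $z_i$ a valid model — i.e. checking that the quasi-isomorphism $A_{PL}(X)\xrightarrow{\simeq}\Q[x_1,\dots,x_m]$ (formality of $X$) can be chosen compatibly with the $\Q[y_i]$-module structures coming from $f$, so that it extends to a quasi-isomorphism after tensoring up with $\Lambda(z_i)$. This is where injectivity of $f^*$ really enters: because $f^*\colon\Q[y_i]\to\Q[x_i]$ is injective, $\Q[x_i]$ is a torsion-free — indeed flat, being a polynomial extension after a suitable change of variables is not automatic, so one argues instead that $\Lambda(y_i,z_i)$ is a \emph{semifree} (Sullivan) resolution of $\Q$ over $\Q[y_i]$, and tensoring a quasi-isomorphism with a semifree module preserves quasi-isomorphism. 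That last fact is standard (\cite[Lemma 14.?]{FHT}-style), so once it is invoked the argument closes; the injectivity hypothesis is then used to guarantee the resulting differential $dz_i=f^*(y_i)$ genuinely has the $f^*(y_i)$ as a regular-sequence-like system so that the cohomology computation is clean, though strictly for the statement of the lemma we only need that $(\Lambda(x_i,z_i),d)$ with these differentials \emph{is} a Sullivan model, not that it is minimal.
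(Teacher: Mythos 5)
Your argument is correct and follows essentially the paper's route: both proofs recognize $(\Lambda(y_1,\ldots,y_n),0)$ as a Sullivan model of $Y$ (rationally a product of even Eilenberg--MacLane spaces), model the path--loop fibration by the acyclic-closure/Koszul extension $dz_i=y_i$, and obtain the stated algebra as the model of the principal fibration $\Omega Y\to F\to X$ pulled back along $f$, the paper merely compressing this by citing the Borel transgression theorem and the standard model of a fibration. The side remarks you include are dispensable and slightly off (there are no syzygies, so the minimal model of $X$ is just $(\Lambda(x_i),0)$; the Eilenberg--Moore/Tor degeneration claim is neither needed nor justified as stated; and injectivity of $f^*$ is not actually required for the displayed algebra to be a Sullivan model), but none of this affects the correct core argument via base change along the semifree extension.
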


\begin{proof}
  By the Borel transgression theorem, $H^*(\Omega Y;\Q)=E(z_1,\ldots,z_n)$ such that $\tau(z_i)=y_i$, where $E(z_1,\ldots,z_n)$ denotes the exterior algebra generated by $z_1,\ldots,z_n$ and $\tau$ denotes the transgression. Let $F$ denote the homotopy fiber of the map $f$. Then the sequence
  \[
    (\Lambda(x_1,\ldots,x_m),0)\xrightarrow{\rm incl}(\Lambda(x_1,\ldots,x_m,z_1,\ldots,z_n),d)\xrightarrow{\rm proj}(\Lambda(z_1,\ldots,z_n),0)
  \]
  is a model of the principal fibration $\Omega Y\to F\to X$, where $dx_i=0$ and $dz_i=f^*(y_i)$. Thus the statement is proved.
\end{proof}

\begin{proposition}
  \label{CD}
  The loop spaces of $Sp(n)/U(n)$ and $SO(2n)/U(n)$ are not homotopy commutative.
\end{proposition}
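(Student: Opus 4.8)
The plan is to realize both spaces as homotopy fibers of maps out of $BU(n)$ and then feed the resulting Sullivan models into Lemmas \ref{model} and \ref{d_2}. The inclusions $U(n)\hookrightarrow Sp(n)$ and $U(n)\hookrightarrow SO(2n)$ give fibrations
\[
  Sp(n)/U(n)\to BU(n)\xrightarrow{\ f\ }BSp(n),\qquad SO(2n)/U(n)\to BU(n)\xrightarrow{\ g\ }BSO(2n).
\]
Since $H^*(BU(n);\Q)=\Q[c_1,\dots,c_n]$, $H^*(BSp(n);\Q)=\Q[q_1,\dots,q_n]$ and $H^*(BSO(2n);\Q)=\Q[p_1,\dots,p_{n-1},e]$ are polynomial, Lemma \ref{model} applies once we check that $f,g$ are injective in rational cohomology. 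Writing $c_i$ for the $i$-th elementary symmetric function in the Chern roots $x_1,\dots,x_n$, passing to underlying bundles gives $f^*(q_l)=\pm e_l(x_1^2,\dots,x_n^2)$ and $g^*(p_l)=e_l(x_1^2,\dots,x_n^2)$, $g^*(e)=c_n$, and since $\Q[x_1,\dots,x_n]$ is free over $\Q[x_1^2,\dots,x_n^2]$ these classes are algebraically independent, giving the injectivity. Expanding $\prod_a(1-x_a^2u)=\bigl(\sum_j c_ju^j\bigr)\bigl(\sum_k(-1)^k c_ku^k\bigr)$ and comparing coefficients yields
\[
  e_l(x_1^2,\dots,x_n^2)=(-1)^l\Bigl(2c_{2l}+\sum_{k=1}^{2l-1}(-1)^k c_kc_{2l-k}\Bigr),\qquad c_j:=0\text{ for }j>n.
\]

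By Lemma \ref{model} we then obtain Sullivan models $(\Lambda(c_1,\dots,c_n,z_1,\dots,z_n),d)$ with $dc_i=0$ and $dz_l=\pm e_l(x_1^2,\dots,x_n^2)$ for $Sp(n)/U(n)$, and $(\Lambda(c_1,\dots,c_n,z_1,\dots,z_{n-1},w),d)$ with $dz_l=e_l(x_1^2,\dots,x_n^2)$ and $dw=c_n$ for $SO(2n)/U(n)$. Neither model is minimal: by the identity above $dz_l$ has linear part $\pm2c_{2l}$ whenever $2l\le n$, so passing to the minimal model we cancel each pair $(c_{2l},z_l)$ with $2l\le n$, and also $c_n$ in the $SO$ case. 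The surviving even-degree generators are then precisely the odd-indexed Chern classes, and each cancelled $c_{2l}$ is carried to an element of word length $\ge2$ in the minimal generators. Consequently, for any $l$ with $2l>n$ — so that $c_{2l}=0$ and $dz_l$ is already a sum of word-length-$2$ monomials — the word-length-$2$ part of $dz_l$ in the minimal model equals
\[
  \pm\sum(-1)^k c_kc_{2l-k},
\]
the sum taken over $k$ with $1\le k,\,2l-k\le n$ and $k,\,2l-k$ both odd.

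It remains to choose such an $l$ making this sum nonzero, which is a short parity check. For $Sp(n)/U(n)$ we take $l=\lceil(n+1)/2\rceil$, so that the surviving sum contains $\pm2\,c_1c_n$ when $n$ is odd and $\pm2\,c_3c_{n-1}$ when $n$ is even (or $\pm c_3^2$ when $n=4$); for $SO(2n)/U(n)$ we take $l=n-1$ when $n$ is even, giving the surviving term $\pm c_{n-1}^2$, and $l=(n+1)/2$ when $n$ is odd, giving $\pm2\,c_3c_{n-2}$ (or $\pm c_3^2$ when $n=5$). In every case $dz_l\not\equiv0\bmod\Lambda^{\ge3}$ in the minimal model, so Lemma \ref{d_2} gives the conclusion. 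The main obstacle will be exactly this last bookkeeping together with the preceding minimalization: one must be sure that at least one index $l$ with $2l>n$ produces a product of two surviving (odd-indexed) Chern classes, and this is precisely what breaks down for the small values of $n$ where these spaces become rationally $\C P^3$, in agreement with Theorem \ref{main}.
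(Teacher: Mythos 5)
Your proposal is correct and follows essentially the same route as the paper: the fibrations over $BU(n)$, Lemma \ref{model}, reduction to the minimal model, and Lemma \ref{d_2} applied to a quadratic term in some $dz_l$. The only differences are bookkeeping choices — the paper inspects $dr_{n-1}\equiv c_{n-1}^2$ (resp.\ $dr_n\equiv c_n^2$) and treats $SO(2n)/U(n)$ by identifying its minimal model with that of $Sp(n-1)/U(n-1)$, while you pick $l$ near $n/2$ and handle the $SO$ case directly — so there is nothing further to flag.
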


\begin{proof}
  First, we consider $Sp(n)/U(n)$. Recall that the cohomology of $BU(n)$ and $BSp(n)$ are given by
  \[
    H^*(BU(n);\Z)=\Z[c_1,\ldots,c_n]\quad\text{and}\quad H^*(BSp(n);\Z)=\Z[q_1,\ldots,q_n],
  \]
  where $c_i$ and $q_i$ are the Chern classes and the symplectic Pontrjagin classes. Then as in \cite[Chapter III, Theorem 5.8]{MT}, the natural map $q\colon BU(n)\to BSp(n)$ satisfies
  \[
    q^*(q_i)=\sum_{k+l=i}(-1)^{i+k}c_kc_l,
  \]
  where $c_0=1$ and $c_i=0$ for $i>n$. Then by Lemma \ref{model}, there is a Sullivan model of $Sp(n)/U(n)$ such that
  \[
    (\Lambda(c_1,\ldots,c_n,r_{1},\ldots r_n),d),\quad dc_i=0,\quad dr_i=\sum_{k+l=2i}(-1)^{i+k}c_kc_l,
  \]
  where $c_0=1$ and $c_i=0$ for $i>n$. Hence the minimal model of $Sp(n)/U(n)$ is given by
  \begin{gather*}
    (\Lambda(c_1,c_3,\ldots,c_{2n-2[n/2]-1},r_{[n/2]+1},\ldots r_n),d)\\
    dc_i=0,\quad dr_i\equiv\sum_{k+l=i-1}(-1)^{i+1}c_{2k+1}c_{2l+1}\mod(c_1,c_3,\ldots,c_{2n-2[n/2]-1})^4,
  \end{gather*}
  where $c_0=1$, $c_i=0$ for $i>n$. Thus modulo $(c_1,c_3,\ldots,c_{2n-2[n/2]-1})^4$,
  \[
    dr_{n-1}\equiv c_{n-1}^2\quad(n\text{ is even}),\quad dr_n\equiv c_n^2\quad(n\text{ is odd}).
  \]
  Therefore, by Lemma \ref{d_2}, $\Omega(Sp(n)/U(n))$ is not homotopy commutative.

  Next, we consider $SO(2n)/U(n)$. The rational cohomology of $BSO(2n)$ is given by
  \[
    H^*(BSO(2n);\Q)=\Q[p_1,\ldots,p_{n-1},e],
  \]
  where $p_i$ are the $i$-the Pontrjagin classes and $e$ is the Euler class. By \cite[Lemma 5.15 and Theorem 5.17]{MT}. Then the natural map $r\colon BU(n)\to BSO(n)$ satisfies
  \[
    r^*(p_i)=\sum_{k+l=2i}(-1)^kc_kc_l\quad\text{and}\quad r^*(e)=c_n,
  \]
  where $c_0=1$ and $c_i=0$ for $i>n$. Thus arguing as above, we can see that the minimal model of $SO(2n)/U(n)$ coincides with that of $Sp(n-1)/U(n-1)$, implying that $\Omega(SO(2n)/U(n))$ is not homotopy commutative.
\end{proof}

\begin{proposition}
  \label{EVII}
  The loop space of $E_7/E_6\cdot T^1$ is not homotopy commutative.
\end{proposition}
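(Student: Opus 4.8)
The plan is to argue as in Proposition~\ref{CD}: realize $E_7/E_6\cdot T^1$ as a homotopy fiber, extract a Sullivan model from Lemma~\ref{model}, pass to the minimal model, and produce an $x$ with $dx\not\equiv 0\bmod\Lambda^3V$ so that Lemma~\ref{d_2} applies. The essential content is to locate a \emph{quadratic} relation in $H^*(E_7/E_6\cdot T^1;\Q)$—a relation one of whose terms is a product of exactly two ring generators—because such a relation is carried in the minimal model by a generator whose differential has a nonzero quadratic part.

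First I would set up the fibration. Since $E_6\cdot T^1\subset E_7$ is a closed connected subgroup of the same rank $7$, the map $f\colon B(E_6\cdot T^1)\to BE_7$ is injective in rational cohomology with homotopy fiber $E_7/E_6\cdot T^1$. Rationally $B(E_6\cdot T^1)\simeq BE_6\times\C P^\infty$, so by the theory of Weyl invariants $H^*(B(E_6\cdot T^1);\Q)$ and $H^*(BE_7;\Q)$ are polynomial algebras on generators of degrees $2,4,10,12,16,18,24$ and $4,12,16,20,24,28,36$. Lemma~\ref{model} then supplies a Sullivan model $(\Lambda(z_1,\dots,z_7,w_1,\dots,w_7),d)$ of $E_7/E_6\cdot T^1$ with $dz_j=0$ and $dw_j=f^*(y_j)$, where the $y_j$ generate $H^*(BE_7;\Q)$ and $|w_j|=|y_j|-1$. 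Moreover $E_7/E_6\cdot T^1$, being an irreducible Hermitian symmetric space, is a compact K\"ahler manifold, hence formal; so its minimal model is the minimal model of $(H^*(E_7/E_6\cdot T^1;\Q),0)$, and it suffices to work with the cohomology ring.

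Next I would compute enough of that ring. From the presentation above, its Poincar\'e polynomial equals
\[
  \frac{(1-q^{10})(1-q^{14})(1-q^{18})}{(1-q)(1-q^5)(1-q^9)},
\]
so the Betti numbers in degrees $2,4,\dots,20$ are $1,1,1,1,2,2,2,2,3,3$. Using hard Lefschetz—multiplication by the K\"ahler class $t\in H^2$ is injective below the middle dimension—one finds that through degree $20$ the ring is generated by $t$ together with classes $\gamma\in H^{10}$ and $\delta\in H^{18}$; that $\{t^9,t^4\gamma,\delta\}$ and $\{t^{10},t^5\gamma,t\delta\}$ are bases of $H^{18}$ and $H^{20}$; and that the first relation sits in degree $20$, where the four monomials $t^{10},t^5\gamma,t\delta,\gamma^2$ span the $3$-dimensional $H^{20}$. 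As $t^{10},t^5\gamma,t\delta$ are already independent, this relation must involve $\gamma^2$, say
\[
  \gamma^2=\alpha\,t^{10}+\beta\,t^5\gamma+\mu\,t\delta\qquad(\alpha,\beta,\mu\in\Q).
\]
In the minimal model $(\Lambda V,d)$ this relation is carried by a generator $x$ with $|x|=19$ and $dx=\gamma^2-\alpha t^{10}-\beta t^5\gamma-\mu t\delta$. Since $t^{10}$ and $t^5\gamma$ are products of more than two generators they lie in $\Lambda^3V$, so $dx\equiv\gamma^2-\mu\,t\delta\bmod\Lambda^3V$, and as $\gamma^2$ and $t\delta$ are linearly independent in $\Lambda V$ this is nonzero. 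By Lemma~\ref{d_2}, $\Omega(E_7/E_6\cdot T^1)$ is not homotopy commutative.

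The step I expect to demand the most care is the cohomology computation: verifying that through degree $20$ the only ring generators are $t,\gamma,\delta$, and that the degree-$20$ relation genuinely contains $\gamma^2$—equivalently, that $t^{10},t^5\gamma,t\delta$ are linearly independent in $H^{20}$, which is exactly where hard Lefschetz enters. Alternatively one may invoke the known multiplicative structure of $H^*(E_7/E_6\cdot T^1)$ (for instance from \={O}shima~\cite{O}) and skip this bookkeeping. Everything else—the fibration, Lemma~\ref{model}, formality of Hermitian symmetric spaces, and the reduction modulo $\Lambda^3V$—is routine.
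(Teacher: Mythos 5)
Your argument is correct, and at the decisive step it takes a genuinely different route from the paper. The paper feeds the same fibration $E_7/E_6\cdot T^1\to B(E_6\cdot T^1)\to BE_7$ into Lemma \ref{model} and Lemma \ref{d_2}, but it obtains the nonzero quadratic part of the differential from Watanabe's explicit computation \cite{W} of the restriction $H^*(BE_7;\Q)\to H^*(B(E_6\cdot T^1);\Q)$: the degree-$20$ generator restricts, modulo the generators shared with $BE_7$, to a polynomial $z_{20}$ containing the quadratic term $v^2$, so after cancelling the contractible pairs the minimal model is $\Lambda(u,v,w,y_{19},y_{27},y_{35})$ with $dy_{19}=z_{20}\not\equiv 0\bmod\Lambda^3V$. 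You avoid any explicit invariant-theoretic input: formality of the compact K\"ahler manifold reduces the minimal model to that of $(H^*(E_7/E_6\cdot T^1;\Q),0)$, the standard Poincar\'e series formula for the equal-rank quotient gives the Betti numbers $1,1,1,1,2,2,2,2,3,3$ in degrees $2,\dots,20$ (which are correct), and hard Lefschetz forces $t^{10},t^5\gamma,t\delta$ to be a basis of $H^{20}$, so the first relation lies in degree $20$ and has nonzero $\gamma^2$-coefficient; this is precisely a nonvanishing quadratic part for the degree-$19$ generator, and your observation that the coefficient of the monomial $\gamma^2$ survives reduction mod $\Lambda^3V$ regardless of $\mu$ is the right way to finish. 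What each approach buys: the paper's is shorter once Watanabe's tables are quoted and exhibits the relation explicitly, while yours is self-contained modulo standard structural facts (Deligne--Griffiths--Morgan--Sullivan formality, the Poincar\'e series of an equal-rank homogeneous space, hard Lefschetz) and only uses the degrees of the invariants, so it is more portable. Two points you should make explicit if you write this up: justify the Poincar\'e series formula (for the equal-rank pair, $H^*(B(E_6\cdot T^1);\Q)$ is a free module over $H^*(BE_7;\Q)$, so the images of the $E_7$-generators form a regular sequence), and justify the phrase ``this relation is carried by a generator $x$ with $|x|=19$'' by the inductive construction of the minimal model of $(H,0)$ through degree $19$ (there are no odd-degree classes and no earlier kernels, so $V^{19}$ is exactly one generator whose differential is the degree-$20$ relation, up to terms in $\Lambda^{\ge 3}V$ and choices of generators, which do not affect the criterion of Lemma \ref{d_2}).
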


\begin{proof}
  As in the proof of \cite[Lemma 2.1]{W}, we have
  \begin{align*}
    H^*(BE_7;\Q)&=\Q[x_4,x_{12},x_{16},x_{20},x_{24},x_{28},x_{36}]\\
    H^*(B(E_6\cdot T^1);\Q)&=\Q[u,v,w,x_4,x_{12},x_{16},x_{24}],
  \end{align*}
  where $|x_i|=i$, $|u|=2$, $|v|=10$ and $|w|=18$. Moreover, the natural map $j\colon B(E_6\cdot T^1)\to BE_7$ satisfies $j^*(x_i)=x_i$ for $i=4,12,16,24$ and $j^*(x_i)\equiv z_i\mod(x_4,x_{12},x_{16},x_{24})$ for $i=20,28,36$, where
  \begin{alignat*}{2}
    z_{20}&=v^2-2uv&z_{28}&=-2vw+18u^5w-6u^6v+u^{14}\\
    z_{36}&=w^2+20u^4vw-18u^9w+2u^{13}v.\quad
  \end{alignat*}
  Then by Lemma \ref{model}, there is a Sullivan model of $E_7/E_6\cdot T^1$ such that
  \[
    (\Lambda(u,v,w,x_4,x_{12},x_{16},x_{24},y_3,y_{11},y_{15},y_{19},y_{23},y_{27},y_{36}),d),
  \]
  where $du=dv=dw=0$ and $dy_i=x_{i+1}$ for $i=3,11,15,23$ and $dy_i\equiv z_{i+1}\mod(x_4,x_{12},x_{16},x_{24})$. Thus we can easily see that the minimal model of $E_7/E_6\cdot T^1$ is given by $(\Lambda(u,v,w,y_{19},y_{27},y_{36}),d)$ such that $du=dv=dw=0$ and $dy_i=z_{i+1}$ for $i=19,27,36$. Therefore by Lemma \ref{d_2}, $\Omega(E_7/E_6\cdot T^1)$ is not homotopy commutative as stated.
\end{proof}

We consider the homotopy nilpotency of flag manifolds. Let $X$ be an H-group. Let $\gamma\colon X\wedge X\to X$ denote the reduced commutator map, and let $\gamma_n=\gamma\circ(\gamma_{n-1}\wedge 1_X)$ for $n\ge 2$ and $\gamma_1=1_X$. Recall from \cite[Definition 2.6.2]{Z} that $X$ is called \emph{homotopy nilpotent of class $<n$} if $\gamma_n\simeq*$. Let $\honil(X)$ denote the homotopy nilpotency class of $X$. Then $X$ is homotopy commutative if and only if $\honil(X)\le 1$. By \cite[Lemma 2.6.6]{Z}, we have:

\begin{proposition}
  \label{nilpotency extension}
  Let $f\colon X\to Y$ be an H-map between H-groups with homotopy fiber $F$. Then
  \[
    \honil(F)\le\honil(X)+1.
  \]
\end{proposition}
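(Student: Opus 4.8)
The plan is to reduce the statement to a single centrality assertion about the connecting map of the fibration. Write $n=\honil(X)$, so that the $(n+1)$-fold reduced commutator $\gamma^{X}_{n+1}$ is null; I want to show that the $(n+2)$-fold reduced commutator $\gamma^{F}_{n+2}$ is null, which gives $\honil(F)\le n+1=\honil(X)+1$. Throughout I use the standard facts in this setting that the homotopy fiber of an H-map between H-groups is again an H-group and that both the fiber inclusion $i\colon F\to X$ and the connecting map $\partial\colon\Omega Y\to F$ are H-maps, so that $\Omega Y\xrightarrow{\partial}F\xrightarrow{i}X$ is a fibration sequence of H-maps.

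First I would record the naturality of reduced commutators under H-maps: for an H-map $g\colon A\to B$ one has $g\circ\gamma^{A}\simeq\gamma^{B}\circ(g\wedge g)$, since $g$ preserves multiplication and inverses up to homotopy; inducting on the recursion $\gamma_{k}=\gamma\circ(\gamma_{k-1}\wedge 1)$ then yields $g\circ\gamma^{A}_{k}\simeq\gamma^{B}_{k}\circ g^{\wedge k}$. Applying this to $i$ and using $\gamma^{X}_{n+1}\simeq*$ gives $i\circ\gamma^{F}_{n+1}\simeq\gamma^{X}_{n+1}\circ i^{\wedge(n+1)}\simeq*$. Because $\Omega Y\xrightarrow{\partial}F\xrightarrow{i}X$ is a fibration sequence, the nullity of $i\circ\gamma^{F}_{n+1}$ produces a lift $\tilde\gamma\colon F^{\wedge(n+1)}\to\Omega Y$ with $\partial\circ\tilde\gamma\simeq\gamma^{F}_{n+1}$. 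Substituting into the recursion,
\[
  \gamma^{F}_{n+2}=\gamma^{F}\circ(\gamma^{F}_{n+1}\wedge 1_F)\simeq\bigl(\gamma^{F}\circ(\partial\wedge 1_F)\bigr)\circ(\tilde\gamma\wedge 1_F),
\]
so it suffices to prove the key lemma that $\gamma^{F}\circ(\partial\wedge 1_F)\colon\Omega Y\wedge F\to F$ is null-homotopic; that is, the image of the connecting map is homotopy central in $F$.

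To prove the key lemma I would work in the path-space model $F=\{(x,\omega)\in X\times Y^{I}:\omega(0)=e,\ \omega(1)=f(x)\}$, in which $\partial(\alpha)=(e,\alpha)$ and the commutator of $(e,\alpha)$ with $(x,\omega)$ has trivial $X$-component and $Y$-component the pointwise loop commutator $t\mapsto\alpha(t)\,\omega(t)\,\alpha(t)^{-1}\,\omega(t)^{-1}$. The required null-homotopy is then given explicitly by
\[
  G_{s}(\alpha,(x,\omega))(t)=\alpha(t)\,\omega(st)\,\alpha(t)^{-1}\,\omega(st)^{-1},\qquad s\in[0,1],
\]
which at $s=1$ is the commutator and at $s=0$ is the constant loop $e$, since $\omega(0)=e$. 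One checks that $G_{s}$ takes values in $\Omega Y$ (using $\omega(1)=f(x)$ and $\alpha(1)=e$ at the endpoint $t=1$) and is basepoint-preserving in both variables, so it descends to $\Omega Y\wedge F$ and witnesses $\gamma^{F}\circ(\partial\wedge 1_F)\simeq*$. Combined with the displayed factorization this gives $\gamma^{F}_{n+2}\simeq*$, as desired.

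The main obstacle is that this clean path-model computation presupposes that $f$ is a strict homomorphism, whereas we are only given that $f$ is an H-map. For a genuine H-map the multiplication on $F$ must be corrected by the chosen homotopy $f(x_{1}x_{2})\simeq f(x_{1})f(x_{2})$, and both the identification of the commutator with a pointwise loop commutator and the homotopy $G_{s}$ acquire correction terms coming from this data. I expect the bulk of the work to lie in carrying out this correction coherently, which is precisely the content of the cited \cite[Lemma 2.6.6]{Z}; I would either import that computation directly or, when $X$ and $Y$ admit topological-group models with $f$ realized as a genuine homomorphism, reduce to the strict case treated above.
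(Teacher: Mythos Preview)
The paper offers no argument of its own here: the proposition is simply recorded as a consequence of \cite[Lemma 2.6.6]{Z}, with no proof block. Your proposal is therefore strictly more detailed than what the paper does, and the outline you give is correct and is essentially the standard proof of that lemma. The reduction to the centrality of the connecting map $\partial\colon\Omega Y\to F$ via naturality of iterated commutators under the H-map $i$ and the lift $\tilde\gamma$ through the fibration $\Omega Y\to F\to X$ is exactly the right mechanism, and your explicit path-space null-homotopy is valid when $f$ is a strict homomorphism. You have also correctly located the only substantive issue, namely the passage from strict homomorphisms to general H-maps, and correctly noted that this is precisely what the cited Zabrodsky lemma absorbs; deferring to it at that point, or rectifying to a strict model when one is available (as in the paper's intended applications to compact Lie groups, where $H\hookrightarrow G$ is an honest homomorphism), is entirely legitimate.
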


\begin{corollary}
  \label{honil}
  Let $G$ be a topological group, and let $H$ be a subgroup of $G$. Then
  \[
    \honil(\Omega(G/H))\le\honil(H)+1.
  \]
\end{corollary}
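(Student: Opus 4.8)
The plan is to obtain this as a direct application of Proposition \ref{nilpotency extension} to the inclusion homomorphism $i\colon H\to G$, so that the work is entirely in identifying the relevant homotopy fiber and in checking that $i$ is an H-map.

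First I would recognize $\Omega(G/H)$ as the homotopy fiber of $i$. The cleanest way to do this for an arbitrary topological group $G$ and subgroup $H$ is via the universal bundle: since $EG$ is a contractible free $G$-space it is in particular a contractible free $H$-space, so $EG/H$ is a model of $BH$ and the projection $EG/H\to EG/G=BG$ is a fiber bundle with fiber $G/H$. Looping the associated fibration sequence $G/H\to BH\to BG$ and using the standard H-equivalences $\Omega BH\simeq H$ and $\Omega BG\simeq G$ yields a fibration sequence
\[
  \Omega(G/H)\to H\xrightarrow{\ i\ }G,
\]
which exhibits $\Omega(G/H)$ as the homotopy fiber of $i$. (When $H$ is closed in $G$, as for the Lie groups in Theorem \ref{main flag}, one may instead use the long fiber sequence of the principal bundle $H\to G\to G/H$.) Next I would check the hypotheses of Proposition \ref{nilpotency extension}: $H$ and $G$ are topological groups, hence H-groups, and $i$ is a continuous group homomorphism, hence strictly multiplicative and in particular an H-map. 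Proposition \ref{nilpotency extension} then gives $\honil(F)\le\honil(H)+1$ for $F$ the homotopy fiber of $i$ equipped with the H-structure supplied by that proposition.

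The one step that deserves care — and the one I would flag as the main, though minor, obstacle — is that $\honil$ can in principle depend on the choice of H-structure, so one must confirm that the H-structure on the homotopy fiber of $i$ inherited from the H-map $i$ agrees up to homotopy with the standard loop multiplication on $\Omega(G/H)$. This is classical: the natural equivalence between $\mathrm{hofib}(i)$ and $\Omega(G/H)$ obtained by looping the fibration sequence above is an H-equivalence. Once this is recorded, the corollary follows: $\honil(\Omega(G/H))\le\honil(H)+1$.
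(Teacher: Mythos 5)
Your proposal is correct and follows exactly the paper's argument: identify $\Omega(G/H)$ as the homotopy fiber of the inclusion $H\to G$ (an H-map, being a homomorphism) and apply Proposition \ref{nilpotency extension}. The extra care you take with the fibration sequence $G/H\to BH\to BG$ and the compatibility of H-structures is a sound elaboration of the same one-line proof given in the paper.
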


\begin{proof}
  The homotopy fiber of the inclusion $H\to G$ is $\Omega(G/H)$, and so by Proposition \ref{nilpotency extension}, the proof is finished.
\end{proof}

Hopkins \cite[Corollary 2.2]{Ho} proved that a connected finite H-group is homotopy nilpotent whenever it is torsion free in homology. Then for a compact connected Lie group $G$ and its closed subgroup $H$, it follows from Corollary \ref{honil} that $\Omega(G/H)$ is homotopy nilpotent whenever $H$ is torsion free in homology (cf. \cite[Proposition 2.2]{Go}).  In particular, we obtain that the loop space of the flag manifold $G/T$ is homotopy nilpotent, where $T$ is a maximal torus of $G$. Now we are ready to prove Theorem \ref{main flag}.

\begin{proof}
  [Proof of Theorem \ref{main flag}]
  Clearly, we may assume $G$ is simply-connected. Since $T$ is homotopy commutative and non-contractible, we have $\honil(T)=1$. Then by Corollary \ref{honil}, $\honil(\Omega(G/T))\le 2$, and so it remains to show that $\Omega(G/T)$ is not homotopy commutative. It is well known that the natural map $H^*(BG;\Q)\to H^*(BT;\Q)^W$ is an isomorphism and
  \[
    H^*(BT;\Q)^W=\Q[x_1,\ldots,x_n],
  \]
  where $W$ is the Weyl group of $G$. Since $G$ is simply-connected, $H^*(BG;\Q)=0$ for $*\le 3$ and $H^4(BG;\Q)\ne 0$. Then we may assume $|x_1|=4$. By Lemma \ref{model}, there is a Sullivan model of $G/T$ such that
  \[
    (\Lambda(t_1,\ldots,t_n,y_1,\ldots,y_n),d),\quad dt_i=0,\quad dy_i=x_i,
  \]
  where $t_1,\ldots,t_n$ are generators of $H^*(BT;\Q)$ which are of degree 2. Since all $x_i$ are decomposables by degree reasons, this is the minimal model of $G/T$. Moreover, $x_1$ is a quadratic polynomial in $t_1,\ldots,t_n$. Then by Lemma \ref{d_2}, $G/T$ has non-trivial Whitehead product, implying that $\Omega(G/T)$ is not homotopy commutative.
\end{proof}

%%%%% Section 3 %%%%%

\section{Steenrod operation}\label{Steenrod operation}

In this section, we prove that the loop spaces of the irreducible Hermitian symmetric spaces of type AIII, BDI, EIII by applying the following lemma. The lemma was proved by Kono and \={O}shima \cite{KO} when $A$ and $B$ are spheres and $p$ is odd, and its variants are used in \cite{HK1,HK2,HKMO,HKO,KK,KM,KOT,KT,T}. For an augmented graded algebra $A$, let $QA^n$ denote the module of indecomposables of dimension $n$.

\begin{lemma}
  \label{criterion}
  Let $X$ be a path-connected space $X$, let $\alpha\colon\Sigma A\to X,\,\beta\colon\Sigma B\to X$ be maps, and let $p$ be a prime. Suppose the following conditions hold:
  \begin{enumerate}
    \item there are $a,b\in H^*(X;\Z/p)$ such that $\alpha^*(a)\ne 0$, $\beta^*(b)\ne 0$, and
    \begin{enumerate}
      \item $\alpha^*(b)=0$ or $\beta^*(a)=0$ for $p=2$,

      \item $A=B$, $\alpha=\beta$ and $a=b$ for $|a|=|b|$ and $p$ odd;
    \end{enumerate}

    \item there are $x\in H^*(X;\Z/p)$ and a Steenrod operation $\theta$ such that $\theta(x)$ is decomposable and includes the term $ab\ne 0$;

    \item $\dim QH^{*}(X;\Z/p)=1$ for $*=|a|,|b|$;

    \item $\theta$ acts trivially on $H^*(\Sigma A\times\Sigma B;\Z/p)$.
  \end{enumerate}
  Then the Whitehead product $[\alpha,\beta]$ in $X$ is non-trivial.
\end{lemma}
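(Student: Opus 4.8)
The plan is to argue by contradiction: suppose $[\alpha,\beta]\simeq *$. I will use the standard characterization of the generalized Whitehead product, namely that $[\alpha,\beta]\simeq*$ if and only if $\alpha\vee\beta\colon\Sigma A\vee\Sigma B\to X$ extends over the product, i.e.\ there is a map $F\colon\Sigma A\times\Sigma B\to X$ restricting to $\alpha$ on $\Sigma A$ and to $\beta$ on $\Sigma B$. Fixing such an $F$, I would study the induced ring homomorphism $F^*$ on $\Z/p$-cohomology and derive a contradiction with conditions (2)--(4).

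The first step is to compute how $F^*$ behaves on decomposables. By the K\"unneth theorem $\tilde H^*(\Sigma A\times\Sigma B;\Z/p)\cong\tilde H^*(\Sigma A)\oplus\tilde H^*(\Sigma B)\oplus\bigl(\tilde H^*(\Sigma A)\otimes\tilde H^*(\Sigma B)\bigr)$, and because suspensions have trivial cup products, any product of three or more positive-degree classes vanishes here. Hence $F^*$ kills every decomposable of length $\ge 3$, while for a length-two decomposable $cd$ the components in the two pure summands are $\alpha^*(c)\alpha^*(d)$ and $\beta^*(c)\beta^*(d)$, which vanish, leaving $F^*(cd)=\alpha^*(c)\otimes\beta^*(d)\pm\alpha^*(d)\otimes\beta^*(c)$ in the mixed summand; here the restriction conditions on $F$ ensure that the $\Sigma A$- and $\Sigma B$-components of $F^*(c)$ are $\alpha^*(c)$ and $\beta^*(c)$.

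Next I would compose $F^*$ with the projection $\phi$ onto the single summand $\tilde H^{|a|}(\Sigma A)\otimes\tilde H^{|b|}(\Sigma B)$. This is where condition (3) is decisive: since $\dim QH^{|a|}=\dim QH^{|b|}=1$ and $a,b$ are indecomposable (as $\alpha^*(a),\beta^*(b)\ne0$), the classes $a$ and $b$ are, up to a scalar and decomposables, the unique generators in their degrees. By the previous step, among all monomials appearing in the decomposable class $\theta(x)$ the only one with $\phi(F^*(-))\ne0$ is $ab$ itself, so $\phi(F^*(\theta(x)))=\lambda\,\phi(F^*(ab))$ where $\lambda\ne0$ is the coefficient of $ab$ supplied by condition (2). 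It then remains to verify $\phi(F^*(ab))\ne0$, and this is exactly what condition (1) is arranged to give: for $p=2$, condition (1a) annihilates one of the two cross terms (and together with condition (3) forces $|a|\ne|b|$, so the two bidegrees are distinct); for $p$ odd with $|a|=|b|$, condition (1b) yields $a=b$, whence $|a|$ is even because $a^2\ne0$ and $F^*(a^2)=2\,\alpha^*(a)\otimes\alpha^*(a)\ne0$; and for $p$ odd with $|a|\ne|b|$ the bidegrees $(|a|,|b|)$ and $(|b|,|a|)$ are automatically distinct. On the other hand, naturality of the Steenrod operation together with condition (4) gives $F^*(\theta(x))=\theta(F^*(x))=0$, hence $\phi(F^*(\theta(x)))=0$. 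This contradiction shows $[\alpha,\beta]$ is non-trivial.

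The main obstacle is the non-cancellation at the heart of the third step: a priori, other decomposable terms of $\theta(x)$ of total degree $|a|+|b|$ could contribute to the same K\"unneth summand and cancel the contribution of $ab$. Ruling this out is precisely the role of condition (3), which forces $a$ and $b$ to be the unique indecomposable generators in their degrees; the residual difficulty is the coefficient- and sign-bookkeeping needed to confirm $\phi(F^*(ab))\ne0$ in each parity case, for which the somewhat technical hypotheses (1a) and (1b) are tailored.
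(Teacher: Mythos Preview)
Your argument is correct and follows essentially the same route as the paper: assume $[\alpha,\beta]=0$, extend $\alpha\vee\beta$ to a map $\mu\colon\Sigma A\times\Sigma B\to X$, and compare the $H^{|a|}(\Sigma A)\otimes H^{|b|}(\Sigma B)$-component of $\mu^*(\theta(x))$ computed via the ring structure against $\theta(\mu^*(x))=0$. Your write-up is in fact more explicit than the paper's about why condition~(3) prevents other decomposable summands of $\theta(x)$ from cancelling the $ab$-contribution, and about the case split governed by~(1a) and~(1b).
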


\begin{proof}
  Suppose $[\alpha,\beta]=0$. Then there is a homotopy commutative diagram
  \[
    \xymatrix{
      \Sigma A\vee\Sigma B\ar[r]^(.65){\alpha\vee\beta}\ar[d]_{\rm incl}&X\ar@{=}[d]\\
      \Sigma A\times\Sigma B\ar[r]^(.65)\mu&X.
    }
  \]
  By the conditions (1), (2) and (3), the $H^{|a|}(\Sigma A;\Z/p)\otimes H^{|b|}(\Sigma B;\Z/p)$-part of $\mu^*(\theta(x))$ is
  \begin{align*}
    \mu^*(ab)&=(\alpha^*(a)\times 1+1\times\beta^*(a))(\alpha^*(b)\times 1+1\times\beta^*(b))\\
    &=
    \begin{cases}
      2\alpha^*(a)\times\beta^*(b)&|a|=|b|\text{ and }p\text{ odd}\\
      \alpha^*(a)\times\beta^*(b)&\text{otherwise},
    \end{cases}
  \end{align*}
  implying $\mu^*(\theta(x))\ne 0$. By the condition (4), we have $\mu^*(\theta(x))=\theta(\mu^*(x))=0$. Then we obtain a contradiction, implying $[\alpha,\beta]\ne 0$, as stated.
\end{proof}

Let $G_{m,n}=U(m+n)/U(m)\times U(n)$. Since $G_{m,n}\cong G_{n,m}$, we may assume $m\le n$. Let $j\colon G_{m,n}\to BU(m)$ denote the natural map. Then since $m\le n$, the map $j$ is a $(2m+1)$-equivalence. Let $g_i\colon S^{2i}\to BU(m)$ denote a generator of $\pi_{2i}(BU(m))\cong\Z$ for $i=1,\ldots,m$. Then since $j$ is a $(2m+1)$-equivalence, there is a map $\bar{g}_i\colon S^{2i}\to G_{m,n}$ such that $j\circ\bar{g}_i=g_i$ for each $i\le m$. Thus
\[
  j\circ[\bar{g}_k,\bar{g}_l]=[j\circ\bar{g}_k,j\circ\bar{g}_l]=[g_k,g_l].
\]
So if $[g_k,g_l]\ne 0$, then $[\bar{g}_k,\bar{g}_l]\ne 0$, implying that $\Omega G_{m,n}$ is not homotopy commutative. We can find a non-trivial Whitehead product $[g_k,g_l]$ by using the result of Bott \cite{B}, but here we use Lemma \ref{criterion} instead.

Recall from \cite[Chapter III, Theorem 6.9]{MT} that the cohomology of $G_{m,n}$ is given by
\[
  H^*(G_{m,n};\Z)=\Z[c_1,\ldots,c_m,\bar{c}_1,\ldots,\bar{c}_n]/(\sum_{i+j=k}c_i\bar{c}_j\mid k\ge 1)
\]
such that $j^*(c_i)=c_i$ for each $i$, where $c_0=\bar{c}_0=1$, $c_i=0$ for $i>m$, $\bar{c}_j=0$ for $j>n$ and the cohomology of $BU(m)$ is as in the proof of Proposition \ref{CD}. We say that a cohomology class $x\in H^k(X;\Z/p)$ is \emph{mod $p$ spherical} if there is a map $\alpha\colon S^k\to X$ such that $\alpha^*(x)\ne 0$. We denote the mod $p$ reduction of an integral cohomology class by the same symbol $x$.

\begin{lemma}
  \label{spherical}
  If $p$ is a prime, then $c_i$ is mod $p$ spherical for $i\le p$.
\end{lemma}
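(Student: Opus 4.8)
The plan is to exhibit, for each $i \le p$, an explicit map $\alpha\colon S^{2i} \to G_{m,n}$ with $\alpha^*(c_i) \ne 0$ in $H^{2i}(G_{m,n};\Z/p)$. Since $j\colon G_{m,n} \to BU(m)$ is a $(2m+1)$-equivalence and $j^*(c_i) = c_i$, and since $i \le p \le m$ would not be guaranteed in general, I first observe that it suffices to produce $\alpha$ when $2i \le 2m$, i.e.\ $i \le m$; for $i > m$ the class $c_i$ is zero and there is nothing to prove, and more carefully one checks $c_i \ne 0$ in $H^*(G_{m,n};\Z/p)$ only for $i \le m$ anyway (this uses the explicit presentation of $H^*(G_{m,n};\Z)$ recalled above). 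So fix $i \le \min(m,p)$. By the $(2m+1)$-equivalence, it is enough to find $g\colon S^{2i} \to BU(m)$ with $g^*(c_i) \ne 0 \bmod p$, and then lift it along $j$ as in the discussion preceding the lemma.

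The key step is to detect the generator of $\pi_{2i}(BU(m)) \cong \pi_{2i}(BU) \cong \Z$ by the Chern class $c_i$ modulo $p$. Equivalently, under the Hurewicz-type pairing, $c_i$ evaluated on the generator of $\pi_{2i}(BU)$ gives $\pm(i-1)!$ — this is the classical computation of the Chern character / Bott periodicity, namely the image of $\pi_{2i}(BU) \to H_{2i}(BU;\Z)$ is $(i-1)!\,\Z$ times a generator of the free part, detected by $c_i$. Hence $g^*(c_i) = \pm(i-1)!\cdot(\text{generator of }H^{2i}(S^{2i};\Z))$ for a generator $g$ of $\pi_{2i}(BU(m))$. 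Now $(i-1)!$ is prime to $p$ precisely when $i - 1 < p$, i.e.\ $i \le p$, since $p \mid (i-1)!$ as soon as $i-1 \ge p$. Therefore for $i \le p$ the reduction $g^*(c_i) \bmod p$ is nonzero, so $c_i$ is mod $p$ spherical in $BU(m)$, and transporting back along $j$ gives a map $\bar g_i\colon S^{2i} \to G_{m,n}$ with $\bar g_i^*(c_i) \ne 0 \bmod p$.

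The main obstacle is pinning down the normalization $(i-1)!$ cleanly: one must be careful that the relevant fact is the order of the cokernel of $\pi_{2i}(U) \to $ (primitives in $H_*(U)$), equivalently the denominator appearing in the Chern character of the generator of $\widetilde K^0(S^{2i})$, which is $(i-1)!$ after the standard index shift — I would cite Bott's computation (as in \cite{B}) or the corresponding statement in \cite{MT} rather than reprove it. Once that normalization is in hand, the divisibility observation $p \nmid (i-1)! \iff i \le p$ is elementary, and the lifting along the highly-connected map $j$ is exactly the argument already given in the paragraph preceding the lemma statement.
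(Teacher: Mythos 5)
Your proposal is correct and follows essentially the same route as the paper: the paper also detects $c_i$ on the generator $g_i$ of $\pi_{2i}(BU(m))$ via the classical normalization $g_i^*(c_i)=\pm(i-1)!\,u_{2i}$ (citing \cite[Chapter IV, Lemma 5.8]{MT}), and then uses $p\nmid(i-1)!$ for $i\le p$ together with the lift $\bar g_i$ along the $(2m+1)$-equivalence $j$ set up before the lemma. Your extra remarks on the range $i\le m$ and the normalization are fine but not a different argument.
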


\begin{proof}
  By \cite[Chapter IV, Lemma 5.8]{MT}, $g_i^*(c_i)=\pm(i-1)!u_{2i}$, where $u_{2i}$ is a generator of $H^{2i}(S^{2i};\Z)\cong\Z$. Then the proof is done.
\end{proof}

\begin{proposition}
  \label{AIII}
  The loop space of $G_{m,n}$ for $m,n\ge 2$ is not homotopy commutative.
\end{proposition}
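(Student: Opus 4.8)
Recall that the discussion preceding the statement reduces the assertion (once we assume $2\le m\le n$) to exhibiting indices $k,l\le m$ with a non-trivial Whitehead product $[g_k,g_l]$ in $BU(m)$. The plan is to produce such a pair by a single application of Lemma \ref{criterion} with $X=BU(m)$, $\alpha=g_k$, $\beta=g_l$, and cohomology classes $a=c_k$, $b=c_l$. The first step is to choose the prime: by Bertrand's postulate there is a prime $p$ with $\tfrac{m+1}{2}<p\le m$ (so $p=2$ when $m=2$, $p=3$ when $m=3,4$, and so on), and we then set $k=p-1$, $l=p$, $x=c_p\in H^{2p}(BU(m);\Z/p)$, and $\theta=P^1$, the first reduced power, to be read as $Sq^2$ when $p=2$.

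The computational core is to evaluate $P^1(c_p)$ in $H^*(BU(m);\Z/p)$. Via the splitting principle, together with the Cartan formula (which makes $P^1$ a derivation on the subring generated by the Chern roots) and $P^1(t)=t^p$ for $|t|=2$, one obtains
\[
  P^1(c_j)=\sum_{a=0}^{j-1}(-1)^{j-1-a}\,c_a\,s_{j+p-1-a},
\]
where $c_a$ is the $a$-th elementary symmetric function in the Chern roots (with $c_0=1$) and $s_r$ the $r$-th power sum. Expanding the power sums by Newton's identities gives $P^1(c_j)\equiv(j-1)\,c_{j+p-1}$ modulo decomposables, and a short further computation of the quadratic part shows that, for $j=p$, the monomial $c_{p-1}c_p$ occurs in $P^1(c_p)$ with coefficient $\equiv 1\pmod p$. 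Since $p>\tfrac{m+1}{2}$ forces $j+p-1=2p-1>m$, the indecomposable term $(p-1)c_{2p-1}$ vanishes in $H^*(BU(m);\Z/p)$, so there $P^1(c_p)$ is decomposable while still containing $c_{p-1}c_p\ne 0$. This verifies condition (2) of Lemma \ref{criterion}.

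It then remains to check the other conditions. Condition (3) holds because $H^*(BU(m);\Z/p)=\Z/p[c_1,\dots,c_m]$ is polynomial, so $\dim QH^{2i}(BU(m);\Z/p)=1$ for $1\le i\le m$, in particular for $i=p-1,p$. For condition (1), the proof of Lemma \ref{spherical} gives $g_i^*(c_i)=\pm(i-1)!\,u_{2i}$, so $g_{p-1}^*(c_{p-1})$ and $g_p^*(c_p)$ are nonzero mod $p$ as $(p-2)!$ and $(p-1)!$ are prime to $p$; when $p=2$ one has in addition $k=1\ne 2=l$ and $g_1^*(c_2)\in H^4(S^2;\Z/2)=0$, which is (1a), whereas for odd $p$ the degrees $|a|=2p-2$ and $|b|=2p$ differ, so neither (1a) nor (1b) is needed. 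Condition (4) is automatic: by the Cartan formula any positive-degree Steenrod operation, in particular $\theta$, acts trivially on $H^*(S^{2k}\times S^{2l};\Z/p)$, since the mod $p$ cohomology of a sphere supports no nonzero operations of positive degree. Hence Lemma \ref{criterion} applies and yields $[g_{p-1},g_p]\ne 0$ in $BU(m)$, which finishes the argument.

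I expect the main obstacle to be the explicit symmetric-function computation that controls the quadratic part of $P^1(c_p)$ and certifies that the coefficient of $c_{p-1}c_p$ is a unit modulo $p$; some extra care is also required for the small cases $m=2,3$ and for the degenerate prime $p=2$, where $P^1$ is interpreted as $Sq^2$ and one invokes (1a) rather than (1b).
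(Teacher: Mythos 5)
Your proposal is correct, and it follows the paper's general framework (reduce to $BU(m)$ via the $(2m+1)$-equivalence $j\colon G_{m,n}\to BU(m)$ and the lifts $\bar g_i$, then apply Lemma \ref{criterion} to spherical Chern classes and a mod $p$ power operation), but the computational core is genuinely different. The paper invokes Shay's mod $p$ Wu formula for $\mathcal{P}^1c_{m-p+2}$, whose relevant quadratic term $-(m+1)c_kc_{m-k+1}$ can die mod $p$ exactly when $m=2p-1$; this forces the paper to produce a second prime in $(m/2,m]$ via Ramanujan primes ($R_2=11$) plus the special cases $m=5,9$. You instead take $p$ with $(m+1)/2<p\le m$ (still Bertrand), apply $\theta=\mathcal{P}^1$ to $x=c_p$, and compute directly from the derivation property and the Girard--Newton expansion that the coefficient of $c_{p-1}c_p$ in $\mathcal{P}^1c_p$ is $-(2p-1)+(-1)^{p-1}p=1-p\equiv 1\pmod p$, always a unit, while the only indecomposable term $(p-1)c_{2p-1}$ vanishes in $H^*(BU(m);\Z/p)$ since $2p-1>m$; I checked this symmetric-function calculation and it is right, and sphericality of $c_{p-1},c_p$ follows from Lemma \ref{spherical} exactly as you say (with Wilson's theorem for $c_p$). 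So your route eliminates the paper's exceptional-case analysis entirely and handles $m=2$ uniformly as the $p=2$, $\mathrm{Sq}^2c_2=c_1c_2$ case (where, as you note, $\mathrm{Sq}^2$ is a derivation on $H^*(BU(m);\Z/2)$ because $\mathrm{Sq}^1=0$ there, and condition (1a) holds since $g_1^*(c_2)=0$); the trade-off is that you must carry out the short Waring-formula computation yourself rather than quote Shay, and your requirement $p>(m+1)/2$ is slightly stronger than the paper's $p>m/2$, though Bertrand's postulate still supplies such a prime for every $m\ge 2$.
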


\begin{proof}
  As observed above, it suffices to show $[g_k,g_l]\ne 0$ for some $k,l$. First, we consider the $m=2$ case. By Lemma \ref{spherical}, $c_1,c_2\in H^*(G_{2,n};\Z/2)$ are mod 2 spherical. By the Wu formula, $\mathrm{Sq}^2c_2=c_1c_2\ne 0$ in $H^*(BU(2);\Z/2)$. Then by Lemmas \ref{criterion} and \ref{spherical}, $[g_1,g_2]\ne 0$.

  Next, we consider the $m>2$ case. Take any odd prime $p$ with $m/2<p\le m$, where such an odd prime exists by Bertrand's postulate. Let $k=m/2$ for $m$ even and $k=(m+1)/2$ for $m$ odd. By Lemma \ref{spherical}, $c_k$ and $c_{m-k+1}$ are mod $p$ spherical. By the mod $p$ Wu formula proved by Shay \cite{Sh}, $\mathcal{P}^1c_{m-p+2}$ is decomposable and includes the term
  \[
    -(m+1)c_kc_{m-k+1}
  \]
  in $H^*(BU(m);\Z/p)$. So if $m+1\not\equiv 0\mod p$, then $[g_k,g_{m-k+1}]\ne 0$. Now we suppose $m+1\equiv 0\mod p$. Then we must have $m=2p-1$. So if there is another prime $q$ in $(m/2,m]$, then $m+1\not\equiv 0\mod q$. So the above argument for the $m+1\not\equiv 0\mod p$ case works, and thus $[g_k,g_{m-k+1}]\ne 0$. Hence we aim to show that there are two primes in $(m/2,m]$. Recall from \cite{So} that the Ramanujan prime $R_n$ is the least integer $k$ such that for each $x\ge k$, there are at least $n$ primes in the interval $(x/2,x]$. It is proved in \cite{So} that $R_n$ exists for each $n$ and $R_2=11$. Then it remains the cases $m=2\cdot 3-1=5$ and $m=2\cdot 5-1=9$, and we have $5/2<3,5\le 5$ and $9/2<5,7\le 9$. Thus there are at least two primes in $(m/2,m]$, completing the proof.
\end{proof}

Let $Q_n=SO(n+2)/SO(2)\times SO(n)$.

\begin{proposition}
  \label{BDI}
  The loop space of $Q_n$ for $n\ge 2$ is not homotopy commutative.
\end{proposition}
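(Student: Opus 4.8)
The plan is to follow the same strategy as in the proof of Proposition \ref{AIII}: reduce to finding a non-trivial Whitehead product between two spherical homotopy classes in $Q_n$, and detect it via Lemma \ref{criterion} using a suitable Steenrod operation. The natural map $j\colon Q_n\to BSO(2)\times BSO(n)$ (or more precisely the projection to $BSO(n)$, together with the generator coming from the $BSO(2)=\C P^\infty$ factor) is a $k$-equivalence for $k$ roughly the connectivity at which the Euler/Pontrjagin classes start, so low-dimensional homotopy classes of $Q_n$ lift classes in the base. Concretely I would recall from \cite{MT} that $H^*(Q_n;\Z/2)$ (or $\Z/p$) is a truncated polynomial-type algebra on the image $t\in H^2$ of the generator of $H^*(BSO(2))$ together with the images of Stiefel--Whitney or Pontrjagin classes of $BSO(n)$, modulo the relations forced by the fibration. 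The key point is that $j^*$ is injective in a range, so a non-zero Whitehead product $[g_k,g_l]$ in the base that survives under the lift gives a non-zero Whitehead product in $Q_n$, hence $\Omega Q_n$ is not homotopy commutative.

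The heart of the argument is to locate, for each $n$, an element $x$ and a Steenrod operation $\theta$ with $\theta(x)$ decomposable and containing a term $ab$ with $a,b$ both mod $p$ spherical and $\dim QH^{|a|}=\dim QH^{|b|}=1$. For small $n$ the generator $t\in H^2(Q_n;\Z/2)$ is mod $2$ spherical (it is the restriction of the $\C P^\infty$-generator, pulled back along an inclusion $\C P^1\hookrightarrow Q_n$), so I would try $\mathrm{Sq}^2$ on an appropriate degree-$(2+2j)$ class and use the Wu formula on $BSO(n)$ to extract a term of the form $t\cdot w$ or $w_j^2$; alternatively, mimicking the $Sp(n)/U(n)$ and $SO(2n)/U(n)$ computations, one expects a class whose square appears as $\mathrm{Sq}^{|w|}w = w^2$ with $w$ spherical. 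For $n$ large one switches to an odd prime $p$ with $n/2 < p \le n$ (Bertrand/Ramanujan as in Proposition \ref{AIII}) and uses the mod $p$ Wu formula for Pontrjagin classes on $BSO(n)$ to produce a decomposable term in $\mathcal{P}^1$ of some $p_i$ that contains a product of two spherical classes of equal degree, invoking clause (1)(b) of Lemma \ref{criterion}. The condition (4) — that $\theta$ acts trivially on $H^*(\Sigma A\times\Sigma B)$ — is automatic since $\Sigma A,\Sigma B$ are spheres and $\theta$ raises degree, so $\theta$ kills the fundamental classes by degree/instability.

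The main obstacle I anticipate is twofold. First, the low-rank exceptional behaviour: $Q_n$ for small $n$ has accidental isomorphisms ($Q_1=\C P^1$, $Q_2\cong \C P^1\times\C P^1$, $Q_3 \cong Sp(2)/U(2)$, $Q_4\cong G_{2,2}$-type, $Q_6\cong SO(8)/U(4)$, etc.), and the cases $n=2,3,4$ may need to be treated separately or folded into the propositions already proved (indeed $Q_2$ is a product of two $\C P^1$'s, whose loop space is visibly non-commutative, and $Q_3,Q_4,Q_6$ overlap with CI, AIII, DIII). Second, the coefficient-chasing in the Wu formula: just as in Proposition \ref{AIII} a binomial coefficient like $m+1$ could vanish mod $p$, here the analogous coefficient (something like $\binom{n}{i}$ or $n\pm\text{const}$) might vanish for the chosen prime, forcing a second-prime argument via the Ramanujan prime $R_2=11$ and a finite check of the residual small cases $n$ with $n = 2p-1$. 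So the write-up will split into (i) $p=2$ for small $n$, leaning on $t\in H^2$ being spherical and the mod $2$ Wu formula, and (ii) odd $p$ for $n\ge$ some bound, leaning on spherical Pontrjagin classes (Lemma \ref{spherical}-type statement for $SO$) and the mod $p$ Wu formula, with a short finite verification at the boundary.
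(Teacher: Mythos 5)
Your plan does not go through, because the analogy with Proposition \ref{AIII} breaks at the very first step. In the Grassmannian case the map $G_{m,n}\to BU(m)$ is a $(2m+1)$-equivalence (its fibre is the highly connected Stiefel manifold $U(m+n)/U(n)$), which is what supplies many spherical classes $c_1,\dots,c_m$ to feed into the Wu formula. For the quadric the only highly connected comparison is with $BSO(2)=\C P^\infty$: the map $Q_n\to BSO(n)$ has fibre $SO(n+2)/SO(2)$, which is merely $1$-connected, so the generators of $\pi_*(BSO(n))$ do not lift and no ``spherical Pontrjagin classes'' are available in $Q_n$. Indeed the fibration $S^1\to SO(n+2)/SO(n)\to Q_n$ shows $\pi_i(Q_n)=0$ for $2<i<n$, so the only candidates for $a,b$ in Lemma \ref{criterion} live in degree $2$ and in the middle dimension; and for odd $p$ the algebra $H^*(Q_{2m-1};\Z/p)\cong\Z/p[t]/(t^{2m})$ has a single indecomposable, so condition (3) (and condition (1)(a), which rules out taking $a=b=t$ with $\alpha=\beta$ at $p=2$ -- note $\mathrm{Sq}^2t=t^2$ also holds in $\C P^\infty$, where all Whitehead products vanish) leaves no room for the odd-prime Bertrand/Ramanujan argument at all. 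So your case (ii) cannot be executed, and your case (i) is also incomplete: the one usable Steenrod relation is $\mathrm{Sq}^2e=te$ with $|e|=n+1$, and the class $e$ is \emph{not} mod $2$ spherical, since in the Stiefel manifold $SO(n+2)/SO(n)$ the degree-$(n+1)$ class is tied to the bottom cell by $\mathrm{Sq}^1$. Your remark that condition (4) is automatic ``since $\Sigma A,\Sigma B$ are spheres'' signals exactly this oversight.

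The paper's proof is structured differently. For $n+1$ not a power of $2$ it does not use Steenrod operations at all: via the circle fibration $S^1\to SO(n+2)/SO(n)\to Q_n$ (which is injective on $\pi_*$ for $*\ge 2$) it quotes \={O}shima's theorem that $[\iota,\iota]\ne 0$ in $\pi_{2n-1}(SO(n+2)/SO(n))$. For the remaining case $n=2m-1$ it applies Lemma \ref{criterion} with $x=e$, $\theta=\mathrm{Sq}^2$, $a=t$ (spherical by Hurewicz) and $b=e$, where $e$ is detected not by a sphere but by a map from the suspension $\Sigma B$, $B=S^{n-1}\cup_2e^{n}$, built from the cell structure $SO(n+2)/SO(n)=\Sigma B\cup e^{2n+1}$, with $j^*(e)=x_{n+1}$ checked via the Gysin sequence; this is precisely why Lemma \ref{criterion} is formulated for general suspensions. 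Both of these ingredients -- the reduction to the Stiefel manifold with \={O}shima's result, and the non-spherical suspension detecting $e$ -- are missing from your proposal, and without them the cases you defer (in particular all even $n$, and the coefficient problems you anticipate) are not actually covered.
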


\begin{proof}
  There is a homotopy fibration
  \begin{equation}
    \label{Q fibration}
    S^1=SO(2)\to SO(n+2)/SO(n)\xrightarrow{q}Q_n.
  \end{equation}
  Then the projection $q\colon SO(n+2)/SO(n)\to Q_n$ is injective in $\pi_*$ for $*\ge 2$, and so by the naturality of Whitehead products, it is sufficient to show that there is a non-trivial Whitehead products in $\pi_*(SO(n+2)/SO(n))$ for some $*\ge 2$. Let $\iota\colon S^n=SO(n+1)/SO(n)\to SO(n+2)/SO(n)$ denote the inclusion. Then \={O}shima \cite{O} proved that the Whitehead product $[\iota,\iota]\in\pi_{2n-1}(SO(n+2)/SO(n))$ is non-trivial whenever $n+1$ is not the power of 2. Thus we obtain that $\Omega Q_n$ is not homotopy commutative if $n+1$ is not the power of 2.

  Suppose $n=2m-1$. Then as in \cite{I}, the cohomology of $Q_n$ is given by
  \[
    H^*(Q_n;\Z)=\Z[t,e]/(t^{m}-2e,e^2),\quad\mathrm{Sq}^2e=te,
  \]
  where $|t|=2$ and $|e|=2m$. Since $Q_n$ is simply-connected, the Hurewicz theorem implies that $t$ is mod 2 spherical. Let $B=S^{n-1}\cup_2e^n$. Then $SO(n+2)/SO(n)=\Sigma B\cup e^{2n+1}$, so that
  \[
    H^*(SO(n+2)/SO(n);\Z/2)=E(x_n,x_{n+1}),\quad|x_i|=i.
  \]
  Let $j\colon\Sigma B\to Q_n$ denote the composition of the inclusion $\Sigma B\to SO(n+2)/SO(n)$ and the projection $q\colon SO(n+2)/SO(n)\to Q_n$. Then by the Gysin sequence for the fibration \eqref{Q fibration}, we get $j^*(e)=x_{n+1}$. Thus by Lemma \ref{criterion}, we obtain that $Q_n$ has non-trivial Whitehead product, implying $\Omega Q_n$ is not homotopy commutative.
\end{proof}

\begin{proposition}
  \label{EIII}
  The loop space of $E_6/Spin(10)\cdot T^1$ is not homotopy commutative.
\end{proposition}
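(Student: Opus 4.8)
The plan is to apply Lemma~\ref{criterion} at the prime $p=3$ to the Whitehead square of a map $S^{8}\to X$, where $X:=E_6/Spin(10)\cdot T^{1}$ is the complex Cayley plane. First I would assemble the cohomological input. Since $E_6/P$ has a Schubert decomposition with cells only in even dimensions, $H^{*}(X;\Z)$ is torsion free, and from the fibration $X\to B(Spin(10)\cdot T^{1})\to BE_6$ together with the rational cohomology of $BE_6$ and its restriction to $B(Spin(10)\cdot T^{1})$ (cf.\ \cite{W}) one gets a presentation
\[
  H^{*}(X;\Q)=\Q[u,y]/(\rho_{18},\rho_{24}),\qquad |u|=2,\ |y|=8,
\]
where $u$ generates $H^{2}(X;\Z)\cong\Z$ and, modulo $3$, $y$ can be taken to be the image under the natural map $j\colon X\to B(Spin(10)\cdot T^{1})$ of the second Pontrjagin class $p_{2}\in H^{8}(BSpin(10))$. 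The Betti numbers of $X$ are those of $(1+t^{2}+\cdots+t^{16})(1+t^{8}+t^{16})$, so, granting this presentation, $\{\,u^{i}y^{k}\mid 0\le i\le 8,\ 0\le k\le 2\,\}$ is a basis; in particular $y^{2}\ne 0$ in $H^{16}(X;\Z/3)$, $\dim QH^{8}(X;\Z/3)=1$, and $u^{4}$ is primitive in $H^{8}(X;\Z)$. Hence the $8$-skeleton of $X$, which is $\C P^{3}$ with two $8$-cells attached, is homotopy equivalent to $\C P^{4}\vee S^{8}$ (primitivity of $u^{4}$ forces the two attaching classes in $\pi_{7}(\C P^{3})\cong\Z$ to be a unimodular pair), so $y$ is mod $3$ spherical: there is $\alpha\colon S^{8}\to X$ with $\alpha^{*}(y)\ne 0$. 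One could alternatively use the inclusion of the Cayley plane $F_4/Spin(9)=S^{8}\cup e^{16}$.

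The heart of the matter is computing $\mathcal{P}^{2}y$ modulo the decomposables involving $u$. Since $y\equiv j^{*}(p_{2})$, it suffices to compute $\mathcal{P}^{2}p_{2}$ in $H^{*}(BSpin(10);\Z/3)=H^{*}(BSO(10);\Z/3)=\Z/3[p_{1},p_{2},p_{3},p_{4},e]$. Writing $p_{i}=\sigma_{i}(t_{1}^{2},\dots,t_{5}^{2})$ for the standard degree-$2$ generators $t_{j}$ of $H^{*}(BT^{5};\Z/3)$, one has $\mathcal{P}(t_{j})=t_{j}+t_{j}^{3}$, hence $\mathcal{P}(t_{j}^{2})=t_{j}^{2}+2t_{j}^{4}+t_{j}^{6}$, and the Cartan formula gives
\[
  \mathcal{P}^{2}p_{2}\equiv\sum_{i<j}\bigl(t_{i}^{6}t_{j}^{2}+t_{i}^{2}t_{j}^{6}+t_{i}^{4}t_{j}^{4}\bigr)\pmod 3 .
\]
Rewriting the right-hand side in power sums and applying Newton's identities yields $\mathcal{P}^{2}p_{2}\equiv p_{1}^{2}p_{2}+2p_{2}^{2}\pmod 3$. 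Pulling back along $j$, and using that $j^{*}(p_{1})$ is a scalar multiple of $u^{2}$ in $H^{4}(X;\Z/3)$, I obtain $\mathcal{P}^{2}y\equiv 2y^{2}$ modulo the ideal generated by $u$. Since also $\mathcal{P}(u)=u+u^{3}$ forces $\mathcal{P}^{2}(u^{4})=0$ in $H^{*}(X;\Z/3)$, the coefficient of $y^{2}$ in $\mathcal{P}^{2}y$ does not change if $y$ is replaced by any other generator of $QH^{8}(X;\Z/3)$; in particular, taking $y$ to be a mod $3$ spherical generator and $\alpha\colon S^{8}\to X$ a map with $\alpha^{*}(y)\ne 0$, condition~(2) of Lemma~\ref{criterion} holds with $x=y$, $\theta=\mathcal{P}^{2}$, $a=b=y$: the class $\mathcal{P}^{2}y$ is decomposable and contains the nonzero term $y^{2}$.

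It then remains to check the other hypotheses with $A=B=S^{7}$, $\alpha=\beta$, $a=b=y$ and $p=3$. Condition~(1)(b) holds by construction; condition~(3) holds because $\dim QH^{8}(X;\Z/3)=1$; and condition~(4) holds because $\mathcal{P}^{2}$ raises degree by $8$, so by the Cartan formula $\mathcal{P}^{2}$ of a fundamental class of either $S^{8}$ factor is $\mathcal{P}^{2}$ of an $8$-dimensional class on $S^{8}$, which is zero, whence $\mathcal{P}^{2}$ acts trivially on $H^{*}(S^{8}\times S^{8};\Z/3)$. Lemma~\ref{criterion} then gives $[\alpha,\alpha]\ne 0$ in $\pi_{15}(X)$, and, as in the preceding cases, this non-trivial Whitehead product shows that $\Omega X$ is not homotopy commutative.

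The Steenrod computation is routine, so the real obstacle is the input on $X$: that $H^{*}(E_6/Spin(10)\cdot T^{1};\Z)$ is torsion free and generated as an algebra by classes in degrees $2$ and $8$, that the degree-$8$ generator can be taken mod $3$ to be $j^{*}(p_{2})$, and that $u^{4}$ is primitive. Establishing this needs the explicit structure of $H^{*}(BE_6)$ and of the restriction $H^{*}(BE_6)\to H^{*}(B(Spin(10)\cdot T^{1}))$ (cf.\ \cite{W} and the classical cohomology of the complex Cayley plane): concretely, in the successive elimination of variables one must verify that the degree-$4$ relation involves $p_{1}$, the degree-$10$ relation involves the Euler class, and the degree-$12$ and $16$ relations involve $p_{3}$ and $p_{4}$ respectively, so that $p_{2}$ survives as the degree-$8$ generator. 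Should one of these relations degenerate, the degree-$8$ generator would have to be re-identified and $\mathcal{P}^{2}$ recomputed on it.
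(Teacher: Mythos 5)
Your strategy (Lemma~\ref{criterion} at an odd prime applied to the Whitehead square of a map $S^8\to X$) is reasonable in outline, but the pivotal input that you yourself flag as "the real obstacle" in fact fails: mod $3$ the class $j^*(p_2)$ is \emph{decomposable} in $H^8(X;\Z/3)$, so it is not a degree-$8$ generator and pulls back to zero under any map $S^8\to X$. To see this you do not even need $H^*(BE_6;\Z/3)$ in degree $8$: since $X\to B(Spin(10)\cdot T^1)\to BE_6$ is a fibration sequence, $j^*$ annihilates the image of $H^{>0}(BE_6;\Z/3)$. The degree-$4$ generator $x_4$ of $H^4(BE_6;\Z/3)$ restricts to $\pm p_1+b\,u'^2$ (the coefficient of $p_1$ is a unit because the embedding $D_5\subset E_6$ has Dynkin index $1$), so $j^*(p_1)\in\Z/3\cdot u^2$; applying $\mathcal{P}^1$ to the relation $j^*(\pm p_1+b\,u'^2)=0$ and using $\mathcal{P}^1p_1\equiv 2p_1^2+2p_2$, $\mathcal{P}^1(u'^2)=2u'^4$ gives
\[
  0=\pm 2\bigl((j^*p_1)^2+j^*p_2\bigr)+2b\,(j^*u')^4 ,
\]
hence $j^*(p_2)\in\Z/3\cdot u^4$. (Rationally $j^*(p_2)$ is indeed the degree-$8$ generator, because $H^*(BE_6;\Q)$ has no generator in degree $8$; mod $3$ the extra relation above is exactly the "degeneration" you anticipated, and it does occur.) Consequently your candidate $a=b=y=j^*(p_2)$ violates condition (1) of Lemma~\ref{criterion}, your identity $\mathcal{P}^2y\equiv 2y^2$ concerns a multiple of $u^4$ rather than the true generator, and since the image of $j^*$ in degree $8$ mod $3$ is only $\Z/3\cdot u^4$, the Steenrod action on the actual generator cannot be extracted from $BSO(10)$ at all --- it would require the mod~$3$ cohomology of $X$ as a module over the Steenrod algebra, which you have not supplied. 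A secondary logical slip: integral primitivity of $u^4$ (needed for your $\C P^4\vee S^8$ splitting of the $8$-skeleton) cannot be deduced from the rational presentation; it needs the integral (Schubert-calculus) description of $H^8(X;\Z)$.

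For comparison, the paper sidesteps all of this by working at $p=2$: Ishitoya's computation gives $H^*(X;\Z/2)=\Z/2[t,w']/(tw'^2,t^{12}+w'^3)$ with $\mathrm{Sq}^2w'=tw'$, the class $t$ is mod $2$ spherical by Hurewicz, and $w'$ is shown to be mod $2$ spherical by combining Conlon's result ($E_6/Spin(10)$ agrees with the Cayley plane $F_4/Spin(9)=S^8\cup e^{16}$ through dimension $30$) with a Gysin argument for the circle bundle $E_6/Spin(10)\to X$; then Lemma~\ref{criterion} applies with $a=t$, $b=w'$, $\theta=\mathrm{Sq}^2$. If you want to keep an odd-prime argument you would have to first determine the mod $3$ Steenrod action on the genuine degree-$8$ generator of $H^*(X;\Z/3)$, which is precisely the information your proposal is missing.
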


\begin{proof}
  As in \cite{I}, the mod 2 cohomology of $E_6/Spin(10)\cdot T^1$ is given by
  \[
    H^*(E_6/Spin(10)\cdot T^1;\Z/2)=\Z/2[t,w']/(tw'^2,t^{12}+w'^3),\quad \mathrm{Sq}^2w'=tw',
  \]
  where $|t|=2$ and $|w'|=8$. Since $E_6/Spin(10)\cdot T^1$ is simply-connected, the Hurewicz theorem implies that $t$ is mod 2 spherical. We can deduce from Conlon's result \cite{C} that $\pi_*(E_6/Spin(10),F_4/Spin(9))=0$ for $*\le 31$. In particular,
  \[
    H^*(E_6/Spin(10);\Z/2)\cong H^*(F_4/Spin(9);\Z/2)\quad(*\le 30).
  \]
  Note that $F_4/Spin(9)$ is the Cayley plane $\mathbb{O}P^2$. Then since $\mathbb{O}P^2=S^8\cup e^{16}$, a generator $u\in H^8(F_4/Spin(9);\Z/2)\cong\Z/2$ is mod 2 spherical, and so a generator $v\in H^8(E_6/Spin(10))\cong\Z/2$ is mod 2 spherical too. By the Gysin sequence associated to the fibration $S^1\to E_6/Spin(10)\xrightarrow{q}E_6/Spin(10)\cdot T^1$, we can see that $q^*(w')=v$, implying $w'$ is mod 2 spherical. Thus by Lemma \ref{criterion}, we obtain that $E_6/Spin(10)\cdot T^1$ has a non-trivial Whitehead product, and so $\Omega(E_6/Spin(10)\cdot T^1)$ is not homotopy commutative.
\end{proof}

Now we are ready to prove Theorem \ref{main}.

\begin{proof}
  [Proof of Theorem \ref{main}]
  Combine Propositions \ref{CD}, \ref{EVII}, \ref{AIII}, \ref{BDI}, \ref{EIII} and the result of Ganea \cite{G} on the homotopy commutativity of the loop space of $\C P^n$ mentioned in Section \ref{introduction}.
\end{proof}

\end{document}